\newcommand{\be}{\begin{equation}}
\newcommand{\ee}{\end{equation}}
\newcommand{\bea}{\begin{eqnarray}}
\newcommand{\eea}{\end{eqnarray}}
\newtheorem{remark}{Remark}
\newtheorem{theorem}{Theorem}
\newtheorem{lemma}{Lemma}
\def\ba{\begin{array}}
\def\ea{\end{array}}
\def\bea{\begin{align}}
\def\eea{\end{align}}
\begin{document}

\begin{frontmatter}

\title{Distributed Q-Learning for Stochastic LQ Control with Unknown Uncertainty\thanksref{footnoteinfo}} 

\thanks[footnoteinfo]{This work is financially supported in part by Research Grants Council of Hong Kong under grant 15215319 and 15216720, in part by the National Natural Science Foundation of China under Grant 61922051, Grant 61633014, Grant 61873332, Grant U1806204,  Grant U1701264, and in part by the Science and Technology Project of Qingdao West Coast New Area under Grant 2019-32, 2020-20, 2020-1-4, and in part by the High-level Talent Team Project of Qingdao West Coast New Area under Grant RCTD-JC-2019-05, and in part by the Key Research and Development Program of Shandong Province under Grant 2020CXGC01208.}
\thanks{Part of this work of the first author was carried out  at College of Electrical Engineering and Automation, Shandong University of Science and Technology, Qingdao 266000, China.}

\author[Xu]{Zhaorong Zhang}\ead{zhaorong.zhang@uon.edu.au},
\author[Xu]{Juanjuan Xu}\ead{juanjuanxu@sdu.edu.cn}   
 and             
\author[Li]{Xun Li}\ead{li.xun@polyu.edu.hk},  

\address[Xu]{School of Control Science and Engineering, Shandong University, Jinan 250061, China.}  
\address[Li]{Department of Applied Mathematics, The Hong Kong Polytechnic University, Kowloon, Hong Kong 999077, China}

\begin{keyword}                           
 Distributed Q-learning, stochastic control, multiplicative noise, unknown statistics.            
\end{keyword}                             

\begin{abstract}                          

This paper studies a discrete-time stochastic control problem with linear quadratic criteria over an infinite-time horizon. We focus on a class of control systems whose system matrices are associated with random parameters involving unknown statistical properties. In particular, we design a distributed Q-learning algorithm to tackle the Riccati equation and derive the optimal controller stabilizing the system. The key technique is that we convert the problem of solving the Riccati equation into deriving the zero point of a matrix equation and devise a distributed stochastic approximation method to compute the estimates of the zero point. The convergence analysis proves that the distributed Q-learning algorithm converges to the correct value eventually. A numerical example sheds light on that the distributed Q-learning algorithm converges asymptotically. 
\end{abstract}

\end{frontmatter}

\section{Introduction}
A large number of phenomena arising from chemistry, biology, ecology, economics, physics, engineering and other disciplines can be modeled as stochastic systems in nature. The past few decades have witnessed the rapid development of the studies of stochastic systems, initiated by Wonham\cite{w} in the 1960s. In particular, stochastic linear-quadratic (LQ) control has become a research focus widely used in engineering systems. For stochastic systems with additive noises, Elia \cite{ei} solved the mean-square stabilization problem of an LQ system with fading channels involving random parameters and derived the optimal feedback solution of a generalized Riccati equation using robust control. Taksar-Poznyak-Iparraguirre \cite{MI} focused on the output feedback regulation problems where the output signal is affected by additive noises, and they devised a robust linear control strategy. Although there have been great success in the research of stochastic systems with additive noises, the systems with multiplicative noises can better model the uncertainties of the system, such as packet dropouts, quantization errors, the constraints on signal-to-noise ratios and bandwidth limits. Therefore, stochastic LQ control with multiplicative noises has been widely studied in diverse disciplines. For example, Willems-Blankenship \cite{fr} derived the necessary and sufficient conditions for the stability of a class of linear systems whose open loop involves white noise multiplicative gain. Zhang-Li-Xu-Fu \cite{zxl} considered the problem of LQ regulation with multiplicative noise and time-delay, and proposed an optimal controller for stabilizing a class of discrete-time stochastic systems based on the solution of the Riccati-ZXL difference equation.

However, in the aforementioned literature, system matrices and parameters should be known in advance, which is unrealistic in most applications. When faced with an unknown system model, the above methods will lose efficiency. In the past few decades, a lot of efforts have been continuously devoted to stochastic control problems with incomplete system dynamics and parameters. In particular, reinforcement learning (RL) algorithms have been widely used in control problems. In a variety of machine learning and computational intelligence problems, the RL method is regarded as a general technology that drives the agent to interact with its environment and obtain the best control policy based on the rewards received from the environment. Generally, the core procedure of RL is policy iteration and value iteration, which enable the RL method to solve control problems with unknown parameters and incomplete system dynamics. Zhang-Cui-Luo-Jiang \cite{he} designed a data-driven RL method that can solve the $H_\infty$ control with unknown nonlinear dynamics and constrained control inputs. Zhao-Liu-Luo \cite{bo} integrated RL and feedforward neural network to develop an online algorithm to solve the stability problem of optimal control with uncertain input constraints. Wang-Zhang-Luo \cite{tao} applied an adaptive dynamic programming algorithm based on value iteration to tackle stochastic LQ control with unknown system parameters in infinite time. Especially, the Q-learning algorithm is a kind of RL methods widely used in LQ control and regulation problems. Li-Chai-Lewis-Ding-Jiang \cite{af} proposed an off-policy Q-learning algorithm, which provides an approximate optimal control strategy for affine nonlinear systems with unknown system dynamics. Xu-Jagnannathan \cite{hao} employed the techniques of adaptive estimator and Q-learning to generate a direct dynamic programming algorithm, which solves the problem of stochastic optimal adjustment. However, in the aforementioned literature, obtaining the solution to algebraic Riccati equation (ARE) and the optimal controller must depend on the knowledge of the statistical information of random parameters, which is not feasible in practical applications\cite{zh}. Recently, Du-Meng-Zhang \cite{du} devised a Q-learning algorithm to solve the Bellman equation of discrete-time LQ control problems, where system parameters are associated with multiplicative noise with unknown Gaussian distribution. 

Note that the algorithm proposed by Du-Meng-Zhang \cite{du} is centralized, which is risky to a certain extent, because all data is transmitted in one packet, which creates an opportunity for an attacker to steal all information. To this end, in this paper, we propose a distributed Q-learning algorithm for stochastic control problems with random parameters involving unknown statistical information. Inspired by the work in \cite{sa}, we design a novel distributed stochastic approximation algorithm. Under the distributed stochastic approximation scheme, we can approximate the zero point of a matrix equation, parameterize it to derive the solution of the Riccati equation, and design the optimal controller. The convergence of the distributed stochastic approximation algorithm has been presented, and its correctness has been verified by numerical examples. 

The rest of this paper is arranged as followed. Problem formulation and preliminaries are introduced in Section \uppercase\expandafter{\romannumeral2}. The main results for the proposed distributed algorithm and its convergence analysis are presented in Section \uppercase\expandafter{\romannumeral3}. Section \uppercase\expandafter{\romannumeral4} displays the simulation results of running the proposed algorithm. Section \uppercase\expandafter{\romannumeral5} provides the conclusion of this paper.

Notations: Define $\mathbb{R}^n$ as the set of real $n\times n$-matrices. Denote $\mathbb{N}$ as the set of integer numbers. Let $\mathbb{S}_+^n$ be the set of positive semidefinite $n\times n$-matrices. $\Vert.\Vert$ stands for the standard $\ell$-2 norm on vectors or their induced norms on matrices. $I_n$ represents the identity matrix with dimension $n\times n$. Given a matrix $A$, denote its pseudo inverse matrix as $A^{\dag}$. Let $A^T$ be the transpose of matrix $A$. The Kronecker product is denoted by $\otimes$. $A\textgreater 0$ indicates that $A$ is a positive-definite matrix. Denote $\mathbb{E}[A]$ as the expectation of matrix $A$. 
\section{Problem Formulation and Preliminaries}
In this section, we establish the problem of LQ control and introduce existing methods for solving this problem. 
\subsection{Problem Formulation}
We consider a discrete-time system described by:
\begin{align}\label{w1}
	x(k+1)=A(k)x(k)+B(k)u(k),
\end{align}
where $x(k)\in \mathbb{R}^n$ is the state, $u(k)\in \mathbb{R}^m$ is the control input, $A(k)$ and $B(k)$ are random matrices with compatible dimensions, which are written as:
\begin{align*}
	A(k)=A+\bar{A}\omega(k),~
	B(k)=B+\bar{B}\omega(k).\nonumber
\end{align*}
Here, $A$, $\bar{A}$, $B$ and $\bar{B}$ are constant matrices, $\omega(k)$ is a multiplicative noise which follows the Gaussian distribution $N(\mu,\sigma^2)$. In particular, the statistics information $\mu$ and $\sigma^2$ are unknown when designing the controller. 

The cost functional is defined as
\begin{align}
	J(x,u)=\sum_{k=0}^{\infty}\left[\begin{matrix}
		x(k)^T&u(k)^T
	\end{matrix}\right]N\left[\begin{matrix}
		x(k)\\u(k)
	\end{matrix}\right],
\end{align}
where $N$ is a diagonal matrix denoted as
\begin{align}
	N=\left[\begin{matrix}
		Q&0\\
		0&R
	\end{matrix}\right],\nonumber
\end{align}
where $Q\textgreater 0$ and $R\textgreater 0$ are constant matrices. Since the cost functional is given over an infinite horizon, the controller is chosen from the stabilizing ones. Namely, the admissible controller set is defined as $U=\{u(k)\Big|\mathbb{E}\sum\limits_{k=0}^{\infty}\Vert u(k)\Vert^2\textless 0, k\in\mathbb{N}\}$. 

Our objective is to minimize the expected value of the cost functional and obtain the optimal and stabilizing controller in admissible control set $U$.  
\begin{rem}
	We emphasize that the LQ control studied in this paper is associated with multiplicative noise involving unknown statistical properties.  Although the research on the LQ control problem of known system parameters has been quite mature, how to solve LQ control with unknown uncertainty in a distributed manner is still an open question. 
\end{rem}
\subsection{Preliminaries}
The stabilizability of system (\ref{w1}) is essentially equivalent to determine whether the value function
\begin{align*}
	V(k,x(k)):=\min\limits_{u(s),s\geq k}\sum\limits_{s=k}^{\infty}\mathbb{E}[x(s)^TQx(s)+u(s)^TRu(s)]
\end{align*}
is finite for all $x\in \mathbb{R}^n$. In addition, in the case where the system matrices are known, the generalized ARE can be expressed as:
\begin{eqnarray}\label{q1}
	P&\hspace{-2mm}=\mathbb{E}[Q+A(k)^TPA(k)]-\mathbb{E}[A(k)^TPB(k)]\nonumber\\
	&\times\mathbb{E}[B(k)^TPB(k)+R]^{-1}\mathbb{E}[B(k)^TPA(k)]
\end{eqnarray}
We have the following lemma: 
\begin{lemma}
	Assume that equation (\ref{q1}) has a unique solution $P\textgreater 0$, then the optimal controller is given by 
	\begin{align}\label{n1}
		u^*(k)=&-\{\mathbb{E}[B(k)^TPB(k)+R]\}^{-1}\mathbb{E}[B(k)^TPA(k)]x(k).\nonumber\\
	&\quad	
	\end{align} 
	Also, the value function has the form of 
	\begin{align}
		V(k,x(k))=\mathbb{E}[x(k)^TPx(k)].
	\end{align}	
\end{lemma}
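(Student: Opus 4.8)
The plan is to establish the lemma by the classical completion-of-squares technique for infinite-horizon stochastic LQ problems, using the generalized ARE (\ref{q1}) to turn a one-step difference of the quadratic form $\mathbb{E}[x(k)^TPx(k)]$ into a perfect square in the control. Throughout, abbreviate $\Gamma:=\mathbb{E}[R+B(k)^TPB(k)]$ and $K^{*}:=-\Gamma^{-1}\mathbb{E}[B(k)^TPA(k)]$, so that the candidate controller (\ref{n1}) is exactly $u^{*}(k)=K^{*}x(k)$; since $P>0$ gives $\mathbb{E}[B(k)^TPB(k)]\ge 0$, we have $\Gamma\ge R>0$, so $\Gamma$ is invertible and $K^{*}$ is well defined.

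First I would fix an arbitrary admissible $u\in U$ and expand the increment $\mathbb{E}[x(k)^TPx(k)]-\mathbb{E}[x(k+1)^TPx(k+1)]$ along the dynamics (\ref{w1}). The essential bookkeeping point is that an admissible $u(k)$ is a measurable function of $x(0),\dots,x(k)$ — equivalently of $\omega(0),\dots,\omega(k-1)$ — and is therefore independent of $\omega(k)$; conditioning on that data and averaging over $\omega(k)$ then replaces $A(k)^TPA(k)$, $A(k)^TPB(k)$ and $B(k)^TPB(k)$ by their expectations inside the outer $\mathbb{E}$. Substituting (\ref{q1}) for $P-\mathbb{E}[A(k)^TPA(k)]$ and completing the square in $u(k)$ should yield the identity
\begin{align*}
\mathbb{E}[x(k)^TPx(k)]-\mathbb{E}[x(k+1)^TPx(k+1)] &= \mathbb{E}[x(k)^TQx(k)+u(k)^TRu(k)]\\
&\quad -\mathbb{E}\big[(u(k)-K^{*}x(k))^T\Gamma(u(k)-K^{*}x(k))\big].
\end{align*}

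Summing from $k=0$ to $k=N-1$ telescopes the left side to $\mathbb{E}[x(0)^TPx(0)]-\mathbb{E}[x(N)^TPx(N)]$. Letting $N\to\infty$ and using that finiteness of the cost along an admissible control forces $x(N)\to 0$ in mean square, the terminal term drops out and one obtains $\mathbb{E}[x(0)^TPx(0)]=J(x,u)-\sum_{k=0}^{\infty}\mathbb{E}[(u(k)-K^{*}x(k))^T\Gamma(u(k)-K^{*}x(k))]$. Since $\Gamma\ge R>0$, every summand is nonnegative, so $J(x,u)\ge\mathbb{E}[x(0)^TPx(0)]$ for every $u\in U$, with equality precisely when $u(k)=K^{*}x(k)$ almost surely for all $k$. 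To finish I would check that this candidate is itself admissible, i.e. that the closed loop $x(k+1)=(A(k)+B(k)K^{*})x(k)$ is mean-square stabilizing: the same identity with $u(k)=K^{*}x(k)$ kills the square term and gives $\mathbb{E}[x(k)^TPx(k)]-\mathbb{E}[x(k+1)^TPx(k+1)]=\mathbb{E}[x(k)^T(Q+K^{*T}RK^{*})x(k)]\ge\lambda_{\min}(Q)\,\mathbb{E}\Vert x(k)\Vert^2$, and telescoping together with $P>0$ bounds $\sum_k\mathbb{E}\Vert x(k)\Vert^2$ by $\lambda_{\min}(Q)^{-1}\mathbb{E}[x(0)^TPx(0)]<\infty$, so $u^{*}\in U$ and the infimum defining $V(0,x)$ is attained and equals $\mathbb{E}[x(0)^TPx(0)]$. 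Because the system matrices and the running cost do not depend on $k$, translating the initial instant from $0$ to $k$ yields $V(k,x(k))=\mathbb{E}[x(k)^TPx(k)]$, and the assumed uniqueness of $P$ makes this representation unambiguous.

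The step I expect to be the main obstacle is the rigorous argument that $\mathbb{E}[x(N)^TPx(N)]\to 0$ as $N\to\infty$ for \emph{every} admissible control, not merely for $u^{*}$: this is where the precise definition of the admissible class $U$ and a detectability-type link between finiteness of $J(x,u)$ and mean-square decay of $x(k)$ must be invoked, and it is also the point at which positive-definiteness of $Q$ (and of $P$) does genuine work. A secondary but unavoidable technical point is the measurability argument underpinning every replacement of the random matrices by their means, namely that $u(k)$ is independent of $\omega(k)$; once these two ingredients are in place, the remainder is the routine matrix algebra of the completion of squares.
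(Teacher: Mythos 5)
Your proposal is correct and follows essentially the same route as the paper: a completion of squares driven by the generalized ARE (\ref{q1}), turning the one-step difference of $\mathbb{E}[x(k)^TPx(k)]$ into the running cost minus a nonnegative square in $u(k)-K^{*}x(k)$, then summing and identifying the minimizer. The only difference is one of care, not of method: you make explicit the independence of $u(k)$ from $\omega(k)$, the vanishing of the terminal term $\mathbb{E}[x(N)^TPx(N)]$, and the admissibility (mean-square stabilizing property) of $u^{*}$, all of which the paper's own proof passes over silently.
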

\begin{proof}	
	Define
	\begin{align}\label{e7}
		V(s,x(s))=\mathbb{E}[x(s)^TPx(s)],
	\end{align} 
	where $P$ is the positive-definite solution to equation (\ref{q1}). It follows from (\ref{e7}) that we have
		\begin{align}\label{p1}
			&\mathbb{E}[V(s+1,x(s+1))-V(s,x(s))]\nonumber\\
			&=\mathbb{E}[x^T(s+1)Px(s+1)-x(s)^TPx(s)]\nonumber\\
			&=\mathbb{E}\{[A(s)x(s)+B(s)u(s)]^TP[A(s)x(s)+B(s)u(s)]\nonumber\\
			&\hspace{4mm}-x^T(s)Px(s)\}\nonumber\\
			&=\mathbb{E}[x^T(s)A^T(s)PA(s)x(s)+u^T(s)B^T(s)PA(s)x(s)\nonumber\\
			&\hspace{4mm}+x^T(s)A^T(s)PB(s)u(s)+u^T(s)B^T(s)PB(s)u(s)\nonumber\nonumber\\
			&\hspace{4mm}-x^T(s)Px(s)]\nonumber\\
			&=\mathbb{E}\{x(s)^T[A(s)^TPA(s)-P-Q]x(s)\nonumber\\
			&\hspace{4mm}+u(s)^TB(s)^TPA(s)x(s)+x(s)^TA(s)^TPB(s)u(s)\nonumber\\
			&\hspace{4mm}+u(s)^T(B(s)^TPB(s)+R)u(s)\nonumber\\
			&\hspace{4mm}-x(s)^TQx(s)-u(s)^TRu(s)\}.
	\end{align}
	Calculate the summation of equation (\ref{p1}) for $s$ from $k$ to $\infty$:
		\begin{align}\label{p2}
			&\sum_{s=k}^{\infty}\mathbb{E}[V(s+1,x(s+1))-V(s,x(s))]\nonumber\\
			&\hspace{4mm}+\sum_{s=k}^\infty \mathbb{E}[x(s)^TQx(s)+u(s)^TRu(s)]\nonumber\\
			&=\sum_{s=k}^{\infty}\mathbb{E}\{[u(s)+(B(s)^TPB(s)+R)^{-1}B(s)^TPA(s)x(s)]^T\nonumber\\
			&\hspace{4mm}\times[R+B(s)^TPB(s)][u(s)+(B(s)^TPB(s)+R)^{-1}\nonumber\\
			&\hspace{4mm}\times B(s)^TPA(s)x(s)]\}-\mathbb{E}\{x(s)^T[B(s)^TPA(s)]^T\nonumber\\
			&\hspace{4mm}\times[R+B(s)^TPB(s)]^{-1}B(s)^TPA(s)x(s)\}\nonumber\\
			&\hspace{4mm}+\sum_{s=k}^{\infty}\mathbb{E}\{x(s)^T[A(s)^TPA(s)-P+Q]x(s)\}. 
		\end{align}
	When $s \to \infty$, the first term of the left-hand side of equation (\ref{p2}) approaches to $\mathbb{E}[V(k,X(k))]$. Also, note that due to Riccati equation (\ref{q1}), the last two terms of the right-hand side of equation (\ref{p2}) is equal to $0$. It indicates that, the value function reaches the minimum when the optimal controller turns to be
\begin{align}\label{p3}	
		u^*(k)=&-\mathbb{E}[B(k)^TPB(k)+R]^{-1}\mathbb{E}[B(k)^TPA(k)]x(k).\nonumber\\
		&\left.\right. 
\end{align}	
\end{proof}
The optimal controller (\ref{n1}) can be further expressed as
\begin{align}\label{n2}
	u^*(k)=&-\mathbb{E}\{[B+\bar{B}\omega(k)]^TP[B+\bar{B}\omega(k)]+R\}^{-1}\nonumber\\
	&\times\mathbb{E}\{[B+\bar{B}\omega(k)]^TP[A+\bar{A}\omega(k)]\}x(k)\nonumber\\
	=&-\mathbb{E}[B^TPB+B^TP\bar{B}\omega(k)+\omega^T \bar{B}^TPB\nonumber\\
	&+\omega(k)^T\bar{B}^TP\bar{B}\omega(k)]^{-1}\mathbb{E}[B^TPA+B^TP\bar{A}\omega(k)\nonumber\\
	&+\omega(k)^T \bar{B}^TPA+\omega(k)^T\bar{B}^TP\bar{A}\omega(k)]x(k)\nonumber\\
	=&-[B^TPB+\mu B^TP\bar{B}+\mu\bar{B}^TPB\nonumber\\
	&+(\mu^2+\sigma^2)\bar{B}^TP\bar{B}]^{-1}[B^TPA+\mu B^TP\bar{A}\nonumber\\
	&+\mu \bar{B}^TPA+(\mu^2+\sigma^2) \bar{B}^TP\bar{A}]x(k),
\end{align}
which suggests that the optimal controller is closely related to the expectation and covariance of the random parameter. 

Therefore, when the expectation or covariance of the random parameter is unknown, the standard solution (\ref{n2}) is not applicable any more. In order to tackle with this situation, \cite{du} presented a novel algorithm. 

In \cite{du}, the authors have rewritten the generalized Riccati equation into the form of
	\begin{align}\label{p6}
		P=\Pi\left(\mathbb{E}\left[\begin{matrix}
			Q+A(k)^TPA(k)&A(k)^TPB(k)\\
			B(k)^TPA(k)&B(k)^TPB(k)+R
		\end{matrix}\right]\right),
\end{align}
where 
$$
	\Pi(P)=P_{xx}-P_{xu}P_{uu}^{\dag}P_{ux}
$$
is defined as a mapping for a matrix $P$ according to the partition $$P=\left[\begin{matrix}
	P_{xx}&P_{xu}\\
	P_{ux}&P_{uu}
\end{matrix}\right].$$
In this case, the optimal controller can be formulated as 
\begin{align*}
	u^*(k)=&\Gamma\hspace{-1mm}\left(\mathbb{E}\left[\begin{matrix}
		Q+A(k)^TPA(k)&A(k)^TPB(k)\\
		B(k)^TPA(k)&B(k)^TPB(k)+R
	\end{matrix}\right]\right)\\
&\times x(k),
\end{align*}
where 
\begin{align*}
	\Gamma(P)=-P_{uu}^{\dag}P_{ux}.
\end{align*}
Let
\begin{align}\label{p5}
	G=\mathbb{E}\left[\begin{matrix}
		Q+A(k)^TPA(k)&A(k)^TPB(k)\\
		B(k)^TPA(k)&B(k)^TPB(k)+R
	\end{matrix}\right],
\end{align}
the Riccati equation (\ref{p6}) can be equivalently written as 
\begin{align} \label{p4}
	P=\Pi(G). 
\end{align}
Substituting (\ref{p4}) into equation (\ref{p5}) yields
\begin{align}\label{p7}
	G=\mathbb{E}\left[\begin{matrix}
		Q+A(k)^T\Pi(G)A(k)&A(k)^T\Pi(G)B(k)\\
		B(k)^T\Pi(G)A(k)&B(k)^T\Pi(G)B(k)+R
	\end{matrix}\right].
\end{align}
In this scenario, solving the Riccati equation (\ref{p6}) can be converted into seeking for the zero point of equation 
(\ref{p7}). Du-Meng-Zhang \cite{du} applied an iterative stochastic approximation algorithm to solve equation (\ref{p7}):
\begin{align}\label{c11}
	G(k+1)=G(k)+\alpha(k)Y(G(k)),
\end{align}
where 
\begin{align}\label{yg}
&Y(G(k))\nonumber\\
=&\left[\begin{matrix}
		Q+A(k)^T\Pi(G(k))A(k)&A(k)^T\Pi(G(k))B(k)\nonumber\\
	B(k)^T\Pi(G(k))A(k)&B(k)^T\Pi(G(k))B(k)+R
	\end{matrix}\right]\nonumber\\
	&-G(k)
\end{align}		
and $\alpha(k)$ is the learning rate sequence satisfying:
\begin{align*}\begin{matrix}	
		\sum_{k=0}^{\infty}\alpha(k)=\infty& and &\sum_{k=0}^{\infty}\alpha(k)^2\le\infty
	\end{matrix}.
\end{align*}

\begin{lemma}
	Let $\{G(k)\}$ be the sequence constructed by stochastic approximation algorithm (\ref{c11}) for $k=1,2,\dots$. 
	
	Then, the following statements are equivalent:
	\begin{itemize}
		\item[a.] The LQ problem (1)-(2) is well-posed;
		\item[b.] ARE (\ref{p6}) admits a solution $P\textgreater 0$;
		\item[c.] $\{G(k)\}$ is bounded with a positive probability;
		\item[d.] $\{G(k)\}$ converges almostly surely (a.s.) to a deterministic matrix $G^*\in\mathbb{S}_+^{m+n}$.
	\end{itemize}
	Moreover, if either statement is valid, one has the following properties: 
	\begin{itemize}
		\item[(1)]The value function $V(x)=x^TPx$ for all $x\in\mathbb{R}^n$;
		\item[(2)]The solution of ARE (\ref{p6}) is given by $P=\Pi(G^*)$;
		\item[(3)]The optimal control is given by $u^*(k)=\Gamma(G^*)x(k)$;
		\item[(4)] {\small$G^*=\mathbb{E}\left[\begin{matrix}
				Q+A(k)^T\Pi(G^*)A(k)&A(k)^T\Pi(G^*)B(k)\\
				B(k)^T\Pi(G^*)A(k)&B(k)^T\Pi(G^*)B(k)+R
			\end{matrix}\right].$}
	\end{itemize}
\end{lemma}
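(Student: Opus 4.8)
The plan is to establish $(a)\Leftrightarrow(b)$ and the cycle $(b)\Rightarrow(d)\Rightarrow(c)\Rightarrow(b)$, and then to read off properties $(1)$--$(4)$ once an ARE solution has been identified. The equivalence $(a)\Leftrightarrow(b)$ is the classical correspondence between well-posedness of the infinite-horizon stochastic LQ problem and solvability of the generalized ARE~(\ref{q1}), equivalently (\ref{p6}). Lemma~1 already supplies $(b)\Rightarrow(a)$: a solution $P>0$ yields the value function $\mathbb{E}[x^TPx]$ and the admissible stabilizing controller~(\ref{n1}). For $(a)\Rightarrow(b)$ I would run the finite-horizon value iteration $P_0=0$, $P_{N+1}=\Pi\big(\mathbb{E}[\,\cdot\,]\big)$, show that $\{P_N\}$ is monotone nondecreasing and bounded above by the finite value function, hence convergent, and that its limit solves (\ref{p6}); positivity follows from $Q>0$.

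The core step is $(b)\Rightarrow(d)$. Let $P>0$ solve (\ref{p6}) and define $G^*$ by the right-hand side of property $(4)$, so that $P=\Pi(G^*)$ and $G^*$ is a zero of the mean field $h(G):=\mathbb{E}[Y(G)]$ (expectation over $\omega(k)$ with $G$ frozen, $Y$ as in (\ref{yg})). Writing (\ref{c11}) in Robbins--Monro form
\begin{align*}
G(k+1)&=G(k)+\alpha(k)\big[h(G(k))+M(k+1)\big],\\
M(k+1)&:=Y(G(k))-h(G(k)),
\end{align*}
one checks that $M(k+1)$ is a martingale difference relative to the natural filtration $\{\mathcal{F}_k\}$ and, since $\omega(k)$ is Gaussian with all moments finite, $\mathbb{E}[\|M(k+1)\|^2\mid\mathcal{F}_k]\le c(1+\|G(k)\|^2)$; the step-size hypotheses are exactly the Robbins--Monro conditions. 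I would then apply the ODE method for stochastic approximation, as in \cite{sa,du}: verify that $h$ is locally Lipschitz, that the limiting ODE $\dot{G}=h(G)$ has $G^*$ as a globally asymptotically stable equilibrium on $\mathbb{S}_+^{m+n}$ --- exploiting the monotonicity and contraction properties of the Riccati operator $G\mapsto\mathbb{E}[\,\cdot\,\Pi(G)\,\cdot\,]$ to construct a suitable Lyapunov function --- and conclude $G(k)\to G^*$ a.s.

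The remaining implications are short. $(d)\Rightarrow(c)$ is immediate, since a.s.\ convergence implies a.s.\ boundedness, a fortiori boundedness with positive probability. For $(c)\Rightarrow(b)$: if $\{G(k)\}$ is bounded on an event of positive probability, then on that event the iterates track the mean-field ODE and every limit point is a zero of $h$; since a zero of $h$ is a deterministic object, $h$ possesses a zero $\bar{G}\in\mathbb{S}_+^{m+n}$, and then $P:=\Pi(\bar{G})$ solves (\ref{p6}) by (\ref{p7}) with $P>0$ because $Q>0$ --- this is $(b)$, and the earlier steps then upgrade $(c)$ to $(d)$. Finally, with $P=\Pi(G^*)$ property $(2)$ is immediate, $(4)$ follows by passing to the limit in (\ref{p7}), and $(1)$ together with the optimal controller $(3)$ follow from Lemma~1 after noting $\Gamma(G^*)=-\big(\mathbb{E}[B(k)^TPB(k)+R]\big)^{-1}\mathbb{E}[B(k)^TPA(k)]$.

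I expect the main obstacle to be the global asymptotic stability of $\dot{G}=h(G)$ at $G^*$ in the $(b)\Rightarrow(d)$ step. Because $\Pi$ contains the pseudo-inverse $P_{uu}^{\dag}$, the mean field $h$ is only piecewise smooth and not globally Lipschitz, and the Riccati operator is contractive only in an appropriate metric; producing a Lyapunov function valid on all of $\mathbb{S}_+^{m+n}$ while simultaneously controlling the noise term $M(k+1)$ is the technical heart of the proof, and is where I would lean most on the stochastic-approximation framework of \cite{sa} and the analysis of \cite{du}.
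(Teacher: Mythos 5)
The paper itself does not prove this lemma: it is imported verbatim from Du--Meng--Zhang \cite{du}, whose centralized algorithm (\ref{c11}) the paper then distributes, so there is no in-paper argument to compare against line by line. Your skeleton --- $(a)\Leftrightarrow(b)$ by classical LQ/value-iteration arguments, the cycle $(b)\Rightarrow(d)\Rightarrow(c)\Rightarrow(b)$ through stochastic approximation, and properties $(1)$--$(4)$ read off from $P=\Pi(G^*)$ and Lemma~1 --- is the natural route and is essentially the route of \cite{du}. Measured against a complete proof, however, what you have is a roadmap with the load-bearing steps missing, and you say so yourself.

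Concretely, two gaps. First, in $(b)\Rightarrow(d)$ the ODE method does not deliver a.s.\ convergence from the Robbins--Monro conditions and the martingale-difference bound alone: you need a.s.\ boundedness of the iterates and a global stability/attractivity argument for the mean field $h$, and deferring this to ``construct a suitable Lyapunov function'' leaves open exactly the part you identify as the technical heart (the pseudo-inverse in $\Pi$ makes $h$ non-Lipschitz, and no Lyapunov function is exhibited). The way \cite{du} --- and, in mirrored form, the boundedness proof of Theorem~1 of this paper --- handles this is more structural than a generic Lyapunov argument: one uses the ARE solution $P>0$ to normalize coordinates ($T_1'PT_1=I$), dominates the iteration by an ``upper bound process'' obtained from $\Pi(G)\leq G_{xx}$, and exploits the strict contraction $\mathbb{E}[\tilde{\Upsilon}'(k)\tilde{\Upsilon}(k)]\leq\lambda I$ with $\lambda<1$ coming from $\tilde N>0$; none of this machinery appears in your sketch. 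Second, in $(c)\Rightarrow(b)$ the assertion that on a positive-probability event of boundedness ``every limit point is a zero of $h$'' does not follow from boundedness alone; the ODE method only tells you the limit set is an internally chain-transitive invariant set of $\dot G=h(G)$, and identifying that set with the zeros of $h$ again requires the very stability analysis you have postponed. So the plan is sound and consistent with \cite{du}, but as a proof it is incomplete at precisely the steps that make the lemma nontrivial.
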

\begin{remark}
	The algorithm proposed by \cite{du} is centralized. That is, the algorithm requires the entire information of $G(k)$ and other system parameters to update $G(k+1)$ iteratively. However, this is not adaptive to problems where information security and privacy are emphasized. It has become mainstream to replace centralized algorithms with distributed ones where global information is not needed. 
	
\end{remark}

\section{Main Results}
In this section, we present a distributed stochastic approximation method for the Q-learning algorithm for stochastic LQ control. The consensus analysis and convergence analysis of the proposed algorithm   are also provided. 
\subsection{A Distributed Q-Learning Algorithm}
Firstly, we present a novel distributed algorithm. Comparing to the centralized algorithm proposed by \cite{du}, where a single sensor has to collect all information, our algorithm involves N sensors where each sensor only has access to partial information. In particular, an undirected graph $\mathcal{G}=\{\mathcal{V},\mathcal{E}\}$, which contains a vertex set $\mathcal{V}$ and an edge set $\mathcal{E}$, is formed by N sensors. Sensor $j$ is said to be a neighbor of sensor $i$, if $i$ and $j$ are connected by an edge. The set of the neighbors of sensor $i$ is denoted by $\mathcal{N}_i$. Each sensor $i=1,\dots, N$ is assigned to collect measurement data and carry out the estimates of $G^*$. Each sensor is also able to share local estimates with its neighboring sensors via the communication links between them. The graph $\mathcal{G}$ is assumed to be connected throughout the paper. 

Based on the graphical model above, our algorithm is implemented in each sensor concurrently. Specifically, each sensor iteratively computes
\begin{align}
&G_i(k+1)\nonumber\\
=&G_i(k)+\sum_{j\in\mathcal{N}_i}(G_j(k)-G_i(k))+\alpha(k)L_iY(G_i(k))\label{c4},
\end{align}
where $\sum_{i=1}^{N}L_i=NI$ and $Y(.)$ has been defined in equation (\ref{yg}).
\begin{remark}
	In the proposed algorithm (\ref{c4}), sensor $i$ iteratively computes its estimate for $G^*$ using local information. That is, sensor $i$ only needs to use $G_i(k)$ and $G_j(k), j\in\mathcal{N}_i$ to update $G_i(k+1)$ instead of knowing the estimates of the whole network. 
\end{remark}

\subsection{Boundness of the Distributed Algorithm}
In this part, we first study the boundness of algorithm (\ref{c4}). 
\begin{theorem}
	Under the condition that ARE (\ref{q1}) has a positive-definite solution $P\textgreater 0$, then $\{G_i(k),k\geq 0\}$
	is bounded with a positive probability for each $i=1,2,\cdots,N$.
\end{theorem}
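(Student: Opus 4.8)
The plan is to bound the distributed iteration \eqref{c4} by comparing it against the centralized iteration \eqref{c11}, whose boundedness (with positive probability) is guaranteed by Lemma 2 under the standing hypothesis that ARE \eqref{q1} admits $P>0$. First I would stack the local estimates into $\mathbf{G}(k)=\mathrm{col}(G_1(k),\dots,G_N(k))$ and rewrite \eqref{c4} in aggregate form as $\mathbf{G}(k+1)=(W\otimes I)\mathbf{G}(k)+\alpha(k)\mathbf{L}\,\mathbf{Y}(\mathbf{G}(k))$, where $W=I-\mathcal{L}_{\mathcal G}$ is built from the graph Laplacian, $\mathbf{L}=\mathrm{diag}(L_1,\dots,L_N)$, and $\mathbf{Y}(\mathbf{G}(k))=\mathrm{col}(Y(G_1(k)),\dots,Y(G_N(k)))$. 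Decompose $\mathbf{G}(k)$ into its consensus (average) component $\bar G(k)=\frac1N\sum_i G_i(k)$ and the disagreement component $\tilde{\mathbf{G}}(k)=\mathbf{G}(k)-\mathbf 1\otimes\bar G(k)$, using the projection onto the eigenspace of $W$ associated with eigenvalue $1$ and its orthogonal complement.

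Next I would analyze the two components separately. For the disagreement $\tilde{\mathbf G}(k)$, since $\mathcal G$ is connected the relevant eigenvalues of $W$ restricted to the complement of the consensus space have modulus strictly less than one, so $\tilde{\mathbf G}(k)$ contracts geometrically up to the forcing term $\alpha(k)\mathbf{L}\mathbf Y(\mathbf G(k))$; because $\sum_k\alpha(k)^2<\infty$ and using the condition $\sum_i L_i=NI$, the accumulated perturbation stays finite, so on the event where the iterates do not escape to infinity the disagreement remains bounded and in fact vanishes. For the consensus component, averaging \eqref{c4} over $i$ and using $\mathbf 1^T W=\mathbf 1^T$ together with $\frac1N\sum_i L_i=I$ yields $\bar G(k+1)=\bar G(k)+\alpha(k)\frac1N\sum_i L_i Y(G_i(k))$; writing $Y(G_i(k))=Y(\bar G(k))+[Y(G_i(k))-Y(\bar G(k))]$ and noting that $\Pi(\cdot)$ (hence $Y(\cdot)$) is locally Lipschitz, the cross terms are controlled by $\|\tilde{\mathbf G}(k)\|$, so $\bar G(k)$ satisfies a perturbed version of the centralized recursion \eqref{c11} with a summable-in-square, asymptotically negligible error. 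A Robbins–Siegmund / supermartingale argument — exactly the mechanism behind the boundedness claim (c) in Lemma 2 — then shows $\{\bar G(k)\}$ is bounded with positive probability, and combining this with the bound on $\tilde{\mathbf G}(k)$ gives boundedness of each $\{G_i(k)\}$ with positive probability.

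The main obstacle I anticipate is handling the coupling between the two components rigorously: the disagreement bound needs the consensus component (through $\mathbf Y(\mathbf G(k))$) not to blow up, while the perturbed-centralized argument for the consensus component needs the disagreement to be summably small — so the two estimates must be closed simultaneously, e.g. by running the supermartingale argument on a joint Lyapunov function such as $V(k)=\|\bar G(k)-G^*\|^2 + c\,\|\tilde{\mathbf G}(k)\|^2$ restricted to a stopping time at which neither component has exceeded a prescribed level, and then showing this stopping time is infinite with positive probability. A secondary technical point is that $\Pi$ involves a pseudo-inverse and is only locally (not globally) Lipschitz, so all Lipschitz estimates must be localized to the (random) compact set on which the truncated iterates live; this is consistent with the ``positive probability'' nature of the conclusion and mirrors the treatment in \cite{du,sa}.
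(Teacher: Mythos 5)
Your route (stack the iterates, split into average and disagreement parts, treat the average as a perturbed copy of the centralized recursion (\ref{c11}), and invoke the boundedness mechanism of Lemma~2) is genuinely different from the paper's, but as written it has a real gap precisely at the point that constitutes the theorem. Both of your component estimates presuppose control of $Y(G_i(k))$: the disagreement bound needs the forcing term $\alpha(k)L_iY(G_i(k))$ not to blow up, and the perturbed-centralized argument for $\bar G(k)$ needs the disagreement (hence again the iterates) to stay in a compact set where $\Pi$, and therefore $Y$, is Lipschitz. You acknowledge this circularity and propose to close it with a joint Lyapunov function and a stopping time, but that closure is only named, not carried out, and it is not routine: $Y(G)$ contains $\Upsilon(k)^{T}\Pi(G)\Upsilon(k)$, which grows with $\|G\|$ and involves a pseudo-inverse, so before the stopping time the increments are not a priori dominated, and ``the mechanism behind claim (c) of Lemma~2'' is not an off-the-shelf Robbins--Siegmund step for a perturbed recursion --- in \cite{du} that mechanism is itself a comparison-process construction, so to make your sketch rigorous you would essentially have to rebuild exactly the device you have omitted. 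A secondary caveat: geometric decay of the disagreement under $W=I-\mathcal{L}$ does not follow from connectivity alone (the Laplacian spectrum must lie in $(0,2)$ on the disagreement subspace), though the paper's own consensus proof makes the same implicit assumption.

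The paper avoids the circularity altogether and proves boundedness sensor by sensor, with no consensus analysis: it normalizes coordinates with a transformation $T$ built from $P$ and $G_{uu}$ so that the fixed point becomes $I$, replaces the nonlinear map $G\mapsto \tilde\Upsilon'(k)\Pi(G)\tilde\Upsilon(k)+\tilde N$ by the linear dominating map $G\mapsto \tilde\Upsilon'(k)G_{xx}\tilde\Upsilon(k)+\tilde N$ (using $G_{xx}\ge\Pi(G)$), shows this map is a mean contraction ($\mathbb{E}[\tilde\Upsilon'(k)\tilde\Upsilon(k)]\le\lambda I$ with $\lambda<1$ because $\tilde N>0$), and then shows the resulting upper-bound processes $\mathcal{G}_i(k)$ converge a.s.\ (via the argument of Lemma~3.4 in \cite{du}), which dominates $\tilde G_i(k)=T'G_i(k)T$ and yields boundedness directly. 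That monotone domination idea is the missing ingredient in your proposal; without it, or a fully executed stopping-time argument of comparable strength, the proposal does not yet establish the claimed boundedness.
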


\begin{proof}	
Since $Q>0$ and $R>0$, it follows from (\ref{p7}) that
$$
		G\geq diag\{Q,R\}\geq\varepsilon I,\nonumber
$$
	where $\varepsilon>0$.
	In addition, from the non-decreasing property of $\Pi$ in \cite{du}, it follows from (\ref{p6}) that we have 
$$
		P\geq \Pi(diag\{Q,R\})\geq \varepsilon I.\nonumber
$$
	Thus, there exist unitary matrices $T_1$ and $T_2$ such that
$$
		T_1'PT_1=I, T_2'G_{uu}T_2=I.\nonumber
$$
	Let $T=\left[
	\begin{array}{cc}
		I & 0 \\
		C & I \\
	\end{array}
	\right]\left[
	\begin{array}{cc}
		T_1 & 0 \\
		0 & T_2 \\
	\end{array}
	\right]
	$ with $C=-G_{uu}^{-1}G_{ux}$ and define
	$\tilde{\Upsilon}(k)=T_1^{-1}\Upsilon(k)T,$ $\tilde{N}(k)=T'NT$ where $N=\left[
	\begin{array}{cc}
		Q & 0 \\
		0 & R \\
	\end{array}
	\right]
	$ and $\Upsilon(k)=\left[
	\begin{array}{cc}
		A(k) & B(k) \\
	\end{array}
	\right]$.\\	
	In view of the fact that $\Pi(T'GT)=T_1'\Pi(G)T_1$, we obtain
	\begin{align}
		I=&T_1'PT_1\nonumber\\
		=&T_1'\Pi(G)T_1\nonumber\\
		=&\Pi\Big(\mathbb{E}[\tilde{\Upsilon}'(k)\tilde{\Upsilon}(k)+\tilde{N}]\Big).\label{n16}
	\end{align}
	By letting $\tilde{G} _i(k)=T'G_i(k)T$, it follows from (\ref{c4}) that we have 	
	\begin{align}
	\tilde{G}_i(k+1)=&\tilde{G}_i(k)+\sum_{j\in\mathcal{N}_i}(\tilde{G}_j(k)-\tilde{G}_i(k))\nonumber\\
		&+\alpha(k)\Big(\tilde{\Upsilon}'(k)\Pi(\tilde{G}_i(k))\tilde{\Upsilon}(k)+\tilde{N}-\tilde{G}_i(k)\Big).\label{ns15}
	\end{align}
	We now prove that $\tilde{G}_i(k)$ is bounded a.s. To this end, we denote
	\begin{align}
		\tilde{\Phi}_G(k)=&\tilde{\Upsilon}'(k)\Pi(G(k))\tilde{\Upsilon}(k)+\tilde{N},\nonumber\\
		\tilde{\Phi}(G(k))=&\mathbb{E}\Big(\tilde{\Upsilon}'(k)\Pi(G(k))\tilde{\Upsilon}(k)+\tilde{N}\Big),\nonumber\\
		\tilde{\Psi}_G(k)=&\tilde{\Upsilon}'(k)G_{xx}(k)\tilde{\Upsilon}(k)+\tilde{N},\nonumber\\
		\tilde{\Psi}(G(k))=&\mathbb{E}\Big(\tilde{\Upsilon}'(k)G_{xx}(k)\tilde{\Upsilon}(k)+\tilde{N}\Big).\nonumber
	\end{align}
	It can be shown that $\tilde{\Psi}(G(k))$ is a contraction mapping under matrix $2$-norm. In fact, from (\ref{n16}), it is obvious that $I=\tilde{\Psi}(I)$. Together with $N>0$, it follows that $\tilde{N}>0$. Then there exists a positive number $\lambda<1$ such that
	$\mathbb{E}\Big(\tilde{\Upsilon}'(k)\tilde{\Upsilon}(k)\Big)=I-\tilde{N}\leq \lambda I$.
	Accordingly, for any $M_1$ and $M_2$, it follows that
	\begin{align*}
		&\Vert\tilde{\Psi}(M_1)-\tilde{\Psi}(M_2)\Vert_2\nonumber\\
		=&\Big\Vert\tilde{\Upsilon}'(k)(M_{1,xx}-M_{2,xx})\tilde{\Upsilon}(k)\Big\Vert_2\nonumber\\
		\leq& \Vert M_{1,xx}-M_{2,xx}\Vert_2\Vert\mathbb{E}\Big(\tilde{\Upsilon}'(k)\tilde{\Upsilon}(k)\Big)\Vert_2\nonumber\\
		\leq&\lambda\Vert M_1-M_{2}\Vert_2,\nonumber
	\end{align*}
	this implies that $\tilde{\Psi}(M)$ is a contraction mapping with respective to $M$ by using $\lambda<1$.

	Define:
	\begin{align}
		\mathcal{G}_i(k+1)=&\mathcal{G}_i(k)+\sum_{j\in\mathcal{N}_i}(\mathcal{G}_j(k)-\mathcal{G}_i(k))\nonumber\\
		&+\alpha(k)[\tilde{\Psi}_{\mathcal{G}_i}(k)-\mathcal{G}_i(k)],\label{n17}
	\end{align}
	with initial value $\mathcal{G}_i(0)=T'G_i(0)T$.
	Since ${\mathcal{G}_i}_{xx}(k)\geq \Pi(\mathcal{G}_i(k))$, we have that
$$
		\tilde{\Psi}_{\mathcal{G}_i}(k)\geq \tilde{\Phi}_{\mathcal{G}_i}(k).\label{n18}
$$
	Thus, it holds that
$$
		\tilde{G}_i(k)\leq \mathcal{G}_i(k), k=0,1,2,\cdots,
$$
	which gives that $\mathcal{G}_i(k)$ is an upper bound process of $\tilde{G}_i(k)$.	
	
Let $\hat{G}_i(k)=\mathcal{G}_i(k)-I$, it follows from (\ref{n17}) and $I=\mathbb{E}[\tilde{\Upsilon}'(k)\tilde{\Upsilon}(k)]+\tilde{N}$ that,	
	\begin{align}
	&\hat{G}_i(k+1)\nonumber\\
		=&[1-\alpha(k)][\mathcal{G}_i(k)-I]+\sum_{j\in\mathcal{N}_i}\Big([\mathcal{G}_j(k)-I]-[\mathcal{G}_i(k)-I]\Big)\nonumber\\
		&+\alpha(k)[\tilde{\Psi}(\mathcal{G}_i(k))-I]\nonumber\\
		=&[1-\alpha(k)]\hat{G}_i(k)+\sum_{j\in\mathcal{N}_i}[\hat{G}_j(k)-\hat{G}_i(k)]\nonumber\\
		&+\alpha(k)\Big[\tilde{\Upsilon}'(k){G_i}_{xx}(k)\tilde{\Upsilon}(k)-\mathbb{E}\Big(\tilde{\Upsilon}'(k){G_i}_{xx}(k)\tilde{\Upsilon}(k)\Big)\Big]\nonumber\\
		&+\alpha(k)\Big[\mathbb{E}\Big(\tilde{\Upsilon}'(k){G_i}_{xx}(k)\tilde{\Upsilon}(k)\Big)-\mathbb{E}[\tilde{\Upsilon}'(k)\tilde{\Upsilon}(k)]\Big].\nonumber\\
	\end{align}
	Denote 
	\begin{align}
		\Theta_i(k)=&\mathbb{E}\Big(\tilde{\Upsilon}'(k){G_i}_{xx}(k)\tilde{\Upsilon}(k)\Big)-\mathbb{E}\Big(\tilde{\Upsilon}'(k){G_i}_{xx}(k)\tilde{\Upsilon}(k)\Big)\nonumber\\
		&+\mathbb{E}\Big(\tilde{\Upsilon}'(k){G_i}_{xx}(k)\tilde{\Upsilon}(k)\Big)-\mathbb{E}[\tilde{\Upsilon}'(k)\tilde{\Upsilon}(k)],\nonumber
	\end{align}
	together with $\mathbb{E}\Big(\tilde{\Upsilon}'(k)\tilde{\Upsilon}(k)\Big)\leq \lambda I$
	, we have
	\begin{eqnarray}
		\mathbb{E}[\Theta_i(k)|\mathcal{F}(k-1)]
		&\leq& \Vert{G_i}_{xx}(k)-I\Vert\mathbb{E}\Big(\tilde{\Upsilon}'(k)\tilde{\Upsilon}(k)\Big)\nonumber\\
		&\leq& \lambda \Vert{G_i}_{xx}(k)-I\Vert_2I\nonumber\\
		&\leq &\lambda \Vert\hat{G}_i\Vert_2I.
	\end{eqnarray}
	Moreover, it is easy to verify that 
	\begin{align*}
		\mathbb{E}[\Vert\Theta_i(k)\Vert^2|\mathcal{F}(k-1)]\leq 36\mu+30\mu\Vert\hat{G}_1(k)\Vert^2,
	\end{align*}
	where $\mu$ satisfies that $\mathbb{E}[\Upsilon'(k)\Upsilon(k)]+N\leq \mu$.
	
	By applying similar discussions to Lemma 3.4 in \cite{du}, it yields
	that $\hat{G}_i(k)$ converges to $0$ a.s., which implies that $\mathcal{G}_i(k)$ is bounded a.s.. As a consequence with $\tilde{G}_i(k)=T'G_i(k)T$ and $\tilde{G}_i(k)\leq \mathcal{G}_i(k)$, it follows that
	$G_i(k)$ is bounded a.s..	
\end{proof}	
\subsection{Convergence Analysis}	
We can now prove the convergence of the distributed algorithm (\ref{c4}). The convergence analysis consists of the following two parts: 
\begin{eqnarray}
\hspace{-2mm}&\lim_{k\rightarrow\infty} &\Vert G_i(k)-G_j(k)\Vert=0,\forall i,j \in N_i,\hspace{2mm} a.s.,\label{c1}\\
&\lim_{k\rightarrow\infty}& \Vert G_i(k)-G^*\Vert=0,\forall i \in N_i,\hspace{2mm} a.s.,\label{c2}
\end{eqnarray}
where $G^*$ is the solution to (\ref{p7}).
The first condition (\ref{c1}) indicates that algorithm (\ref{c4}) achieves consensus, and the second condition (\ref{c2}) indicates that the consensus value is the solution to (\ref{p7}).

The detailed proof is given as below.

\subsubsection{Consensus Analysis}
We first focus on proving that each sensor reaches consensus under the distributed scheme (\ref{c4}). 
\begin{theorem}

Suppose that equation (\ref{q1}) has the solution of $P\textgreater 0$, then the proposed algorithm (\ref{c4}) achieves consensus.
\end{theorem}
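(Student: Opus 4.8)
The plan is to read (\ref{c4}) as a distributed stochastic-approximation recursion and to split its dynamics into an \emph{average component} and a \emph{disagreement component}. Stacking the local estimates into $\mathbf{G}(k)=\big(\mathrm{vec}\,G_1(k)^T,\dots,\mathrm{vec}\,G_N(k)^T\big)^T$ and writing $\mathcal{L}$ for the Laplacian of $\mathcal{G}$, the coupling term $\sum_{j\in\mathcal{N}_i}(G_j(k)-G_i(k))$ becomes $-(\mathcal{L}\otimes I)\mathbf{G}(k)$, so (\ref{c4}) reads $\mathbf{G}(k+1)=(W\otimes I)\mathbf{G}(k)+\alpha(k)\,\mathbf{L}_{\mathrm d}\mathbf{Y}(k)$ with $W=I-\mathcal{L}$, $\mathbf{L}_{\mathrm d}=\mathrm{blkdiag}(L_1,\dots,L_N)$ and $\mathbf{Y}(k)$ the stack of the $Y(G_i(k))$. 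Define the network average $\bar G(k)=\frac{1}{N}\sum_{i}G_i(k)$ and the disagreement $e_i(k)=G_i(k)-\bar G(k)$, with stack $\mathbf{e}(k)$. Because $\mathcal{G}$ is undirected, $\sum_i\sum_{j\in\mathcal{N}_i}(G_j(k)-G_i(k))=0$, and since $\sum_i L_i=NI$ the averaging annihilates the coupling term, so $\bar G(k)$ obeys essentially the centralized recursion of \cite{du}; subtracting $\mathbf 1\otimes\bar G(k+1)$ from the stacked recursion leaves, on the disagreement subspace $\mathbf 1^\perp\otimes\mathbb{R}^{(m+n)^2}$,
\[
\mathbf{e}(k+1)=(W\otimes I)\mathbf{e}(k)+\alpha(k)\,\zeta(k),\qquad \zeta(k):=\mathbf{L}_{\mathrm d}\mathbf{Y}(k)-\mathbf 1\otimes\frac{1}{N}\textstyle\sum_i L_iY(G_i(k)),
\]
and one checks directly that $\mathbf e(k)$ and $\zeta(k)$ stay in this subspace for every $k$.

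Next I would invoke connectivity of $\mathcal{G}$: $0$ is a simple eigenvalue of $\mathcal L$ with eigenvector $\mathbf 1$, so — with the edge weights chosen, as in \cite{sa}, to make $W$ a convergent averaging matrix — the restriction of $W$ to $\mathbf 1^\perp$ has spectral radius $\rho<1$. Since $\mathbf e(k)\in\mathbf 1^\perp$ for all $k$, this yields the contraction estimate $\Vert\mathbf e(k+1)\Vert\le\rho\Vert\mathbf e(k)\Vert+\alpha(k)\Vert\zeta(k)\Vert$.

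The remaining step is to show $\alpha(k)\Vert\zeta(k)\Vert\to0$ almost surely. By Theorem 2, on an event of full probability the iterates are bounded, $\Vert G_i(k)\Vert\le M$ for all $k$; since $Y$ is quadratic in $\Upsilon(k)=[\,A(k)\ \ B(k)\,]$, one gets $\Vert\zeta(k)\Vert\le c(M)\big(1+\Vert\omega(k)\Vert^2\big)$. As $\omega(k)$ is i.i.d.\ with finite fourth moment and $\sum_k\alpha(k)^2<\infty$, a Borel--Cantelli (equivalently, supermartingale-convergence) argument, parallel to the treatment of the analogous centralized term in \cite{du}, gives $\sum_k\alpha(k)^2\Vert\zeta(k)\Vert^2<\infty$ a.s., hence $\alpha(k)\Vert\zeta(k)\Vert\to0$ a.s. Feeding this into the contraction estimate and using the elementary fact that $a_{k+1}\le\rho a_k+b_k$ with $\rho<1$, $b_k\ge0$, $b_k\to0$ forces $a_k\to0$, we conclude $\Vert\mathbf e(k)\Vert\to0$, i.e.\ $\Vert G_i(k)-G_j(k)\Vert\to0$ a.s.\ for all $i,j$, which is (\ref{c1}) and proves the theorem.

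The step I expect to be the main obstacle is reconciling the contraction with the \emph{unbounded} multiplicative noise: $Y(G_i(k))$ is not bounded even on the event where $G_i(k)$ is, so the naive ``bounded perturbation, vanishing step size'' reasoning does not apply. Making $\alpha(k)\Vert\zeta(k)\Vert\to0$ rigorous requires the square-summability of $\alpha(k)$ together with moment bounds on $\omega(k)$, and extra care because the bound $M$ on the iterates is itself random; a localization over the events $\{\sup_k\Vert G_i(k)\Vert\le m\}$, $m\to\infty$, handles this. A lesser point is justifying $\rho<1$, which needs the weight choice guaranteeing that every nonzero eigenvalue of the (scaled) Laplacian lies in $(0,2)$.
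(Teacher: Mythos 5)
Your proposal follows essentially the same route as the paper: stack the iterates, project onto the disagreement subspace with $I-\frac{1}{N}\mathbf{1}\mathbf{1}^T$, use connectivity of $\mathcal{G}$ to get a geometric contraction of the disagreement dynamics, and let the vanishing step size $\alpha(k)$ kill the perturbation term (the paper writes this via the explicit convolution sum $\sum_{\tau}\alpha(\tau)(\mathcal{A}-M)^{k-\tau-1}(I-M)\Phi(\tau)$ rather than your one-step inequality, which is an equivalent bookkeeping choice). Your extra care about the perturbation being unbounded due to the Gaussian multiplicative noise (square-summability of $\alpha$, moment bounds, localization over $\{\sup_k\Vert G_i(k)\Vert\le m\}$) is a point the paper simply asserts away by claiming $\Vert\Phi(\tau)\Vert<\infty$, so your treatment is, if anything, the more rigorous version of the same argument.
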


\begin{proof}

Denote  $F(k)=\mathbf{col}\{G_1(k),G_2(k),\dots,G_N(k)\}$ and $\Phi(k)=\mathbf{col}\{L_1Y_1(k),L_2Y_2(k),\dots,L_NY_N(k)\}$, it follows from (\ref{c4}) that 
\begin{align*}
	F(k+1)=\mathcal{A}F(k)+\alpha(k)\Phi(k),
\end{align*}
where $\mathcal{A}=I-L$ and $L$ is the Laplacian matrix.

Let $M=\frac{1}{N}\textbf{1}_N\textbf{1}_N'$ and $\delta(k)=(I-M)F(k)$, we further obtain
\begin{align*}
	\delta(k+1)=&(I-M)\mathcal{A}F(k)+\alpha(k)(I-M)\Phi(k)\nonumber\\
	=&(\mathcal{A}-M)\delta(k)+\alpha(k)(I-M)\Phi(k),\label{c5}
\end{align*}
where the facts $\mathcal{A}M=M\mathcal{A}=M^2=M$ have been used in the derivation of the last equality.

By applying iterative calculation to (\ref{c5}), it yields that 
\begin{align}
	\delta(k+1)=&(\mathcal{A}-M)^{k+1}\delta(0)+\sum_{\tau=0}^{k}\alpha(\tau)(\mathcal{A}-M)^{k-\tau}\nonumber\\
	&\times(I-M)\Phi(\tau).\nonumber
\end{align}
To prove that the algorithm achieves consensus in the almost sure sense, the key point is to analyze the norm of $\delta(k)$. 
Specifically, we have:
	\begin{align}
		&\Vert\delta(k+1)\Vert\nonumber\\
		=&\Vert(A-M)^k\delta(0)+\sum\limits_{\tau=0}^{k-1}(A-M)^{k-\tau-1}(I-M)\Phi(\tau)\Vert\nonumber\\
		\leq& \Vert(A-M)^k\delta(0)\Vert+\Big\Vert\sum\limits_{\tau=0}^{k-1}(A-M)^{K-\tau-1}(I-M)\Phi(\tau)\Big\Vert\nonumber\\
		\leq& c\rho^{k}\Vert\delta(0)\Vert+\sum\limits_{\tau=0}^{k-1}\Vert\alpha(\tau)(A-M)^{k-\tau-1}(I-M)\Phi(\tau)\Vert\nonumber\\
		\leq& c\rho^k\Vert\delta(0)\Vert+\sum\limits_{\tau=0}^{k-1}\alpha(\tau)\Vert(A-M)^{k-\tau-1}\Vert\Vert I-M\Vert\Vert\Phi(\tau)\Vert.
\end{align}
Based on the fact that, given a connected graph, it yields that 
\begin{align}
	\Vert(\mathcal{A}-M)^k\Vert\leq c\rho^k,
\end{align}
where $c\textgreater 0$ and ${\rho\in(0,1)}$.
It holds that $\Vert I-M\Vert\textless \infty$, $\Vert\delta(0)\Vert\textless \infty$ and $\Vert\Phi(\tau)\Vert\textless \infty$, we have 
\begin{align*}
	\lim\limits_{k\rightarrow \infty}c\rho^k\Vert\delta(0)\Vert\rightarrow 0
\end{align*}
\begin{align*}
	\lim\limits_{k\rightarrow \infty}\sum\limits_{\tau=0}^{k-1}\Vert(A-M)^{k-\tau-1}\Vert\Vert(I-M)\Vert\Vert\Phi(\tau)\Vert\to 0.
\end{align*}
Thus, \begin{align*}
	\lim\limits_{k\rightarrow \infty}\Vert\delta(k)\Vert=0, a.s.
\end{align*}
That is, equation (\ref{c1}) holds.  
\end{proof}
\subsubsection{Convergence Analysis to the Solution of (\ref{c4})}
In this part, we present discussions on the convergence of equation (\ref{c4}), that is, to prove that equation (\ref{c2}) holds. 
\begin{theorem}
Under the assumption that ARE (\ref{q1}) has a solution $P\textgreater 0$, then $G_i(k)$, $i=1,\dots,N$ converge a.s. to $G(k)$.
\end{theorem}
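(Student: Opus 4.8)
The plan is to prove the claimed limit by reducing everything to the behaviour of the network average and then invoking the consensus result of Theorem 2; the target limit is, by (\ref{c2}), the zero point $G^*$ of (\ref{p7}) (equivalently the matrix $G^*$ produced in Lemma 2). Concretely, set $\bar G(k):=\frac1N\sum_{i=1}^N G_i(k)$. The first step is to show that $\bar G(k)$ satisfies a classical Robbins--Monro recursion. Summing (\ref{c4}) over $i$ and using that $\mathcal G$ is undirected, so that $\sum_{i}\sum_{j\in\mathcal N_i}(G_j(k)-G_i(k))=0$, together with $\sum_{i=1}^N L_i=NI$, gives
\[
\bar G(k+1)=\bar G(k)+\frac{\alpha(k)}{N}\sum_{i=1}^N L_iY_i(k),
\]
where $Y_i(k)$ is the random increment (\ref{yg}) formed at sensor $i$.

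Second, I would split $Y_i(k)=\bar Y(G_i(k))+M_i(k)$, where $\bar Y(G)$ is the mean field (the right-hand side of (\ref{p7}) with $G$ substituted, minus $G$) and $M_i(k):=Y_i(k)-\mathbb E[Y_i(k)\mid\mathcal F(k-1)]$ is a martingale difference, and further write $\sum_iL_i\bar Y(G_i(k))=N\,\bar Y(\bar G(k))+\sum_iL_i\big(\bar Y(G_i(k))-\bar Y(\bar G(k))\big)$. This puts the average process in the form
\[
\bar G(k+1)=\bar G(k)+\alpha(k)\big[\bar Y(\bar G(k))+\xi(k)+\eta(k)\big],
\]
with noise $\xi(k)=\frac1N\sum_iL_iM_i(k)$ and consensus residual $\eta(k)=\frac1N\sum_iL_i\big(\bar Y(G_i(k))-\bar Y(\bar G(k))\big)$. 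By Theorem 1 the iterates are bounded a.s., and --- exactly as in the proof of Theorem 1 --- they stay in a region on which $G_{uu}\geq\varepsilon I$, where $\Pi$, and hence $\bar Y$, is Lipschitz; consequently Theorem 2 forces $\eta(k)\to0$ a.s., while the conditional second-moment bound $\mathbb E[\|M_i(k)\|^2\mid\mathcal F(k-1)]\leq c(1+\|G_i(k)\|^2)$ already used in the boundedness proof controls $\xi(k)$.

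Third, I would close the argument via the mean-field ODE $\dot G=\bar Y(G)$, i.e.\ $\dot G=\Phi(G)-G$ in the notation of (\ref{p7}). Its unique equilibrium is $G^*$ by Lemma 2, and it is globally asymptotically stable: in the coordinates $T'GT$ used in the proof of Theorem 1 one has the contraction estimate for $\tilde\Psi$, which supplies a Lyapunov function of the type $\|T'(G-G^*)T\|^2$ exactly as in \cite{du}. With a.s.\ boundedness, the step sizes satisfying $\sum_k\alpha(k)=\infty$ and $\sum_k\alpha(k)^2<\infty$, the martingale-difference noise $\xi(k)$, and the vanishing bias $\eta(k)$, the ODE (Kushner--Clark) convergence theorem --- the distributed counterpart of Lemma 3.4 in \cite{du} --- yields $\bar G(k)\to G^*$ a.s. Finally, $\|G_i(k)-G^*\|\leq\|G_i(k)-\bar G(k)\|+\|\bar G(k)-G^*\|$, and both terms tend to $0$ a.s.\ by Theorem 2 and the preceding step, which is precisely (\ref{c2}).

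The hard part will not be the averaging bookkeeping but the analytic handling of the pseudo-inverse inside $\Pi$: $P_{uu}^{\dag}$ is continuous only where the rank of $P_{uu}$ is locally constant, so one must first confine the iterates to the set $\{G:G_{uu}\geq\varepsilon I\}$ (using $Q,R>0$ and the bound $G\geq\mathrm{diag}\{Q,R\}$), on which $\Pi$ becomes Lipschitz and the ODE method is legitimate; upgrading Theorem 1's ``bounded with a positive probability'' to the a.s.\ boundedness that is actually needed here, and transplanting the centralized global-stability/Lyapunov argument of \cite{du} to the mean field $\Phi$, are the genuinely delicate points. A secondary subtlety is matching the consensus and stochastic-approximation timescales: here the $O(1)$ consensus gain makes $\eta(k)\to0$ geometrically fast by Theorem 2, which is comfortably enough for the ODE method, though a purely pathwise treatment would instead require checking $\sum_k\alpha(k)\|\eta(k)\|<\infty$.
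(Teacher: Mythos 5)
Your plan is sound in outline, but it is a genuinely different route from the paper's. The paper also forms the average $\bar G(k)=\frac1N\sum_i G_i(k)$, but instead of analyzing it as a stand-alone Robbins--Monro scheme it couples it to the \emph{centralized} iterate $G(k)$ of (\ref{c11}): setting $\Delta(k)=\bar G(k)-G(k)$, it derives $\Delta(k+1)=[1-\alpha(k)]\Delta(k)+\alpha(k)\Psi(k)$, where $\Psi(k)$ collects a term built from $W=\Pi(\bar G(k))-\Pi(G(k))$ and the consensus residual $\frac1N\sum_iL_i\big(Y(G_i(k))-Y(\bar G(k))\big)$, and then concludes $\Delta(k)\to0$ from boundedness of $\Psi(k)$ and $\alpha(k)\to0$; convergence to $G^*$ is only assembled in the next theorem by citing \cite{du} for the centralized sequence. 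You dispense with the centralized sequence entirely and run the ODE/Kushner--Clark method on the averaged recursion (mean field plus martingale noise $\xi(k)$ plus consensus bias $\eta(k)$), in the spirit of \cite{sa}, obtaining $\bar G(k)\to G^*$ directly and finishing by the triangle inequality; note that the theorem as stated asks for convergence to $G(k)$, so your conclusion $G_i(k)\to G^*$ matches it only after invoking Lemma 2 to identify $\lim_k G(k)=G^*$ (and it is really the content of Theorems 3--4 combined). As for what each approach buys: the paper's coupling argument is shorter and offloads all dynamical questions to \cite{du}, but its decisive step is fragile --- a bounded perturbation $\Psi(k)$ with $\alpha(k)\to0$ only keeps $\Delta(k)$ bounded, it does not force $\Delta(k)\to0$; one needs $\Psi(k)\to0$ (or summability), and since the $W$-term depends on $\Delta(k)$ itself this in turn needs a Lipschitz/contraction estimate on $\Pi$ of exactly the kind your decomposition makes explicit. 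Conversely, your route needs the heavier stochastic-approximation machinery: almost-sure boundedness (Theorem 1 as stated only gives boundedness with positive probability), Lipschitz continuity of $\Pi$ on $\{G:G_{uu}\geq\varepsilon I\}$, and global stability of the mean-field ODE transplanted from \cite{du} --- precisely the points you yourself flag as delicate and leave unproved, which the paper sidesteps (without filling them) by appealing to the centralized results.
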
 
\begin{proof}	
We define
$\bar{G}(k+1)=\frac{1}{N}\sum_{i=1}^{N}G_i(k+1)$. It is thus obtained from (\ref{c4}) that 
\begin{align*}
	\bar{G}(k+1)=\bar{G}(k)+\frac{\alpha(k)}{N}\sum_{i=1}^{N}L_iY(G_i(k))\nonumber.
\end{align*}
Note that 
\begin{align*}
	&\sum_{i=1}^{N}L_iY_i(G^*)\nonumber\\
	=&\sum_{i=1}^{N}L_i\times\Big(\left[\begin{matrix}
		Q+A(k)^T\Pi(G^*)A(k)&A(k)^T\Pi(G^*)B(k)\\
		\hspace{0mm}B(k)^T\Pi(G^*)A(k)&B(k)^T\Pi(G^*)B(k)+R
	\end{matrix}\right]\nonumber\\
	&-G^*\Big)\nonumber\\
	=&0.\nonumber
\end{align*}
Also, recall the centralized algorithm (\ref{c11}), i.e.,
$$
	G(k+1)=G(k)+\alpha(k)Y(G(k)),\nonumber
$$
and let $\Delta(k)=\bar{G}(k)-G(k)$, the iteration equation of $\Delta(k)$ is given by 
\begin{align}
	&\Delta(k+1)	=\Delta(k)+\frac{\alpha(k)}{N}\sum_{i=1}^{N}L_iY(G_i(k))-\alpha(k)Y(G(k))\nonumber\\
	=&\Delta(k)+\frac{\alpha(k)}{N}\Big[\sum_{i=1}^{N}L_iY(G_i(k))-NY(G(k))\Big]\nonumber\\
	=&\Delta(k)+\alpha(k)\Big(Y(\bar{G}(k))-Y(G(k))\Big)\nonumber\\
	&+\frac{\alpha(k)}{N}\Big[\sum_{i=1}^{N}L_i\Big(Y(G_i(k))-Y(\bar{G}(k))\Big)\Big]\nonumber\\
	=&[1-\alpha(k)]\Delta(k)+\alpha(k)\times\left[\begin{matrix}
		A(k)^TWA(k)&A(k)^TWB(k)\\
		B(k)^TWA(k)&B(k)^TWB(k)
	\end{matrix}\right]\nonumber\\
	&+\frac{\alpha(k)}{N}\Big[\sum_{i=1}^{N}L_i\Big(Y(G_i(k))-Y(\bar{G}(k))\Big)\Big],\label{c12}
\end{align}
where $W=\Pi(\bar{G}(k))-\Pi(G(k)$.
The derivation of (\ref{c12}) depends on the following fact:
\begin{align}
	&Y(\bar{G}(k))-Y(G(k))\nonumber\\
	=&\left[\begin{matrix}
		Q+A(k)^T\Pi(\bar{G}(k))A(k)&A(k)^T\Pi(\bar{G}(k))B(k)\\
		B(k)^T\Pi(\bar{G}(k))A(k)&B(k)^T\Pi(\bar{G}(k))B(k)+R
	\end{matrix}\right]\nonumber\\
	&-\bar{G}(k)\nonumber\\
	&-\left[\begin{matrix}
		Q+A(k)^T\Pi(G(k))A(k)&\hspace{-2mm}A(k)^T\Pi(G(k))B(k)\\
		B(k)^T\Pi(G(k))A(k)&\hspace{-2mm}B(k)^T\Pi(G(k))B(k)+R
	\end{matrix}\right]\nonumber\\
	&+G(k)\nonumber\\
	=&\left[\begin{matrix}
		A(k)^TWA(k)&A(k)^TWB(k)\\
		B(k)^TWA(k)&B(k)^TWB(k)
	\end{matrix}\right]-\Big(\bar{G}(k)-G(k)\Big)\nonumber.
\end{align}
Denote
\begin{align}
	\Psi(k)=&\left[\begin{matrix}
		A(k)^TWA(k)&A(k)^TWB(k)\\
		B(k)^TWA(k)&B(k)^TWB(k)
	\end{matrix}\right]\nonumber\\
	&+\frac{1}{N}\Big[\sum_{i=1}^{N}L_i\Big(Y(G_i(k))-Y(\bar{G}(k))\Big)\Big].\nonumber
\end{align}
Then, equation (\ref{c12}) can be reformulated as 
\begin{align}
	\Delta(k+1)
	=[1-\alpha(k)]\Delta(k)+\alpha(k)\Psi(k).\label{c13}
\end{align}
Together with 
$$
	\Vert G(k)\Vert\textless \infty,
$$
it follows that,
\begin{align*}
	\Big\Vert\left[\begin{matrix}
		A(k)^TWA(k)&A(k)^TWB(k)\\
		B(k)^TW)A(k)&B(k)^TWB(k)
	\end{matrix}\right]
	\Big\Vert\textless \infty.
\end{align*}
According to the above analysis, we have 
$$
	\Vert\Psi(k)\Vert\textless \infty. 
$$	
Applying the facts that $0<1-\alpha(k)<1$, $\Vert\Psi(k)\Vert<\infty$ and $\lim_{k\rightarrow\infty} \alpha(k)=0,$ we obtain 
\begin{align}
	\lim_{k\rightarrow\infty}\Vert\Delta(k+1)\Vert=0.\label{c14}
\end{align}
This gives the second condition (\ref{c2}). 

\end{proof}
Now we are ready to give the main result of this paper. 
\begin{theorem}
Under the assumption that the ARE (\ref{q1}) has a positive-definite solution, the distributed algorithm  (\ref{c4}) is able to converge a.s. to $G^*$.
\end{theorem}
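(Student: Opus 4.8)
The plan is to obtain this statement as a direct synthesis of Theorems 2--4 together with Lemma 2, so that essentially no new computation is required. First I would record what the hypothesis buys through Lemma 2: since ARE (\ref{q1}) (equivalently (\ref{p6})) admits a solution $P>0$, statement (b) of Lemma 2 holds, hence so do (c) and (d); in particular the centralized iterate $G(k)$ generated by (\ref{c11}) converges a.s. to the deterministic matrix $G^{*}\in\mathbb{S}_{+}^{m+n}$ that satisfies (\ref{p7}), with $P=\Pi(G^{*})$. Thus the target limit $G^{*}$ in the statement is precisely this object.

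Next I would bring in Theorem 4: under the same hypothesis the averaged iterate $\bar{G}(k)=\frac{1}{N}\sum_{i=1}^{N}G_{i}(k)$ tracks the centralized trajectory, i.e. $\Delta(k)=\bar{G}(k)-G(k)\to 0$ a.s. Combining with the previous paragraph gives $\bar{G}(k)\to G^{*}$ a.s. Then Theorem 3 (consensus) supplies $\|\delta(k)\|\to 0$ a.s., where $\delta(k)=(I-M)F(k)$ with $F(k)=\mathbf{col}\{G_{1}(k),\dots,G_{N}(k)\}$ and $M=\frac{1}{N}\mathbf{1}_{N}\mathbf{1}_{N}'$; equivalently $\|G_{i}(k)-\bar{G}(k)\|\to 0$ a.s. for every $i$. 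The triangle inequality
\begin{align*}
\|G_{i}(k)-G^{*}\|\le \|G_{i}(k)-\bar{G}(k)\|+\|\bar{G}(k)-G(k)\|+\|G(k)-G^{*}\|
\end{align*}
then forces the left-hand side to $0$ a.s. for each $i=1,\dots,N$, which is the claim.

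The one point that needs care is that Theorems 2--4 are each stated on their own almost-sure (or positive-probability) event, whereas the argument above implicitly requires them to hold on a common event of probability one. This is exactly where Theorem 2 and the dichotomy inside Lemma 2 enter: boundedness of $\{G_{i}(k)\}$ with positive probability upgrades, via the equivalence (c)$\Leftrightarrow$(d), to almost-sure boundedness, and on that full-probability event the supermartingale/ODE-type arguments behind Theorems 3 and 4 --- which rely on $\sum_{k}\alpha(k)=\infty$, $\sum_{k}\alpha(k)^{2}<\infty$, the contraction property of $\tilde{\Psi}$, the bound $\|\Phi(\tau)\|<\infty$, and $\|(\mathcal{A}-M)^{k}\|\le c\rho^{k}$ --- can be run simultaneously. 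I expect this measure-theoretic bookkeeping (intersecting the null sets and checking the learning-rate hypotheses are invoked consistently across the three theorems) to be the only genuine subtlety; the remaining content is the triangle inequality displayed above.
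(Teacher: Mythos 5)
Your proposal is correct and follows essentially the same route as the paper: the paper also concludes by combining the consensus result ($\Vert G_i(k)-G_j(k)\Vert\to 0$), the tracking result ($\Vert\bar{G}(k)-G(k)\Vert\to 0$), and the centralized convergence $G(k)\to G^*$ from \cite{du} (Lemma 2), exactly the triangle-inequality synthesis you describe. Your extra attention to intersecting the almost-sure events is a reasonable refinement of the same argument rather than a different approach.
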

\begin{proof}
On the basis of the analysis given above, we prove that the sensors are able to reach consensus and their consensus states will converge to   
\begin{align*}
	\lim_{k\rightarrow\infty} \Vert G_i(k)-G_j(k)\Vert&=0,\forall i,j \in N_i,\hspace{1mm} a.s.,\\
	\lim_{k\rightarrow\infty} \Vert\bar{G}(k)-G(k)\Vert&=0,\forall i,\hspace{1mm}a.s.,
\end{align*}
which further imply that 
$$
	\lim_{k\rightarrow \infty}\Vert G_i(k)-G(k) \Vert=0, \forall i \hspace{1mm}a.s. 
$$
Based on the convergence analysis in \cite{du}, we finally obtain
$$
	\lim_{k\rightarrow \infty}\Vert G_i(k)-G^* \Vert=0, \forall i,\hspace{1mm}a.s. 
$$
Thus, the distributed stochastic approximation algorithm converges a.s. to $G^*$. 
\end{proof}

\section{Numerical Example}
We implement the distributed Q-learning algorithm (\ref{c4}) in a discrete-time model. Suppose that the system parameters are given as followed: 
$A=\left[\begin{matrix}
0.2&0\\
0&0.6
\end{matrix}\right]$, $\bar{A}=\left[\begin{matrix}
0.7&0\\
0&0.8\end{matrix}\right]$, $B=\left[\begin{matrix}
0.7\\
0.3
\end{matrix}\right]$,  $\bar{B}=\left[\begin{matrix}
0.1\\
0.7
\end{matrix}\right]$, $Q=\left[\begin{matrix}
0.4&0\\
0&0.7
\end{matrix}\right]$, $R=1$, $\alpha(k)=(\frac{1}{k+2})^{0.6}$. Assume that random parameters follow the Gaussian distribution $N(\mu,\sigma^2)$, where $\mu=1$, $\sigma^2=0.1$. The system is associated with a networked system where sensors $i=1,\dots, 4$ are employed to autonomously calculate $G_i(k)$, which are the estimates of the Q-factor in the $k$-th iteration. The induced graph of the networked system is shown by Fig.~\ref{fig1}. Fig. \ref{fig3} illustrates the $1$-norm of $G_i(k)$ for $i=1,\dots, 4$ after running the proposed algorithm for $200$ times. Fig. \ref{fig2} reveals that for $i=1$, $G_1(k)$ converges to the correct solution $G^*$. Also, other sensors have similar convergence behaviors as sensor $1$ does. 
\begin{figure}[H]
\begin{center}
	\includegraphics[width=4cm]{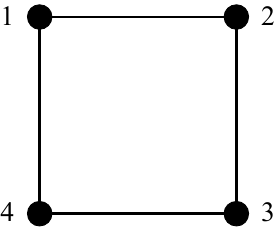}
\end{center}
\caption{A 4-sensor graph}\label{fig1}
\end{figure}

\begin{figure}[H]
\begin{center}
	\includegraphics[width=9cm]{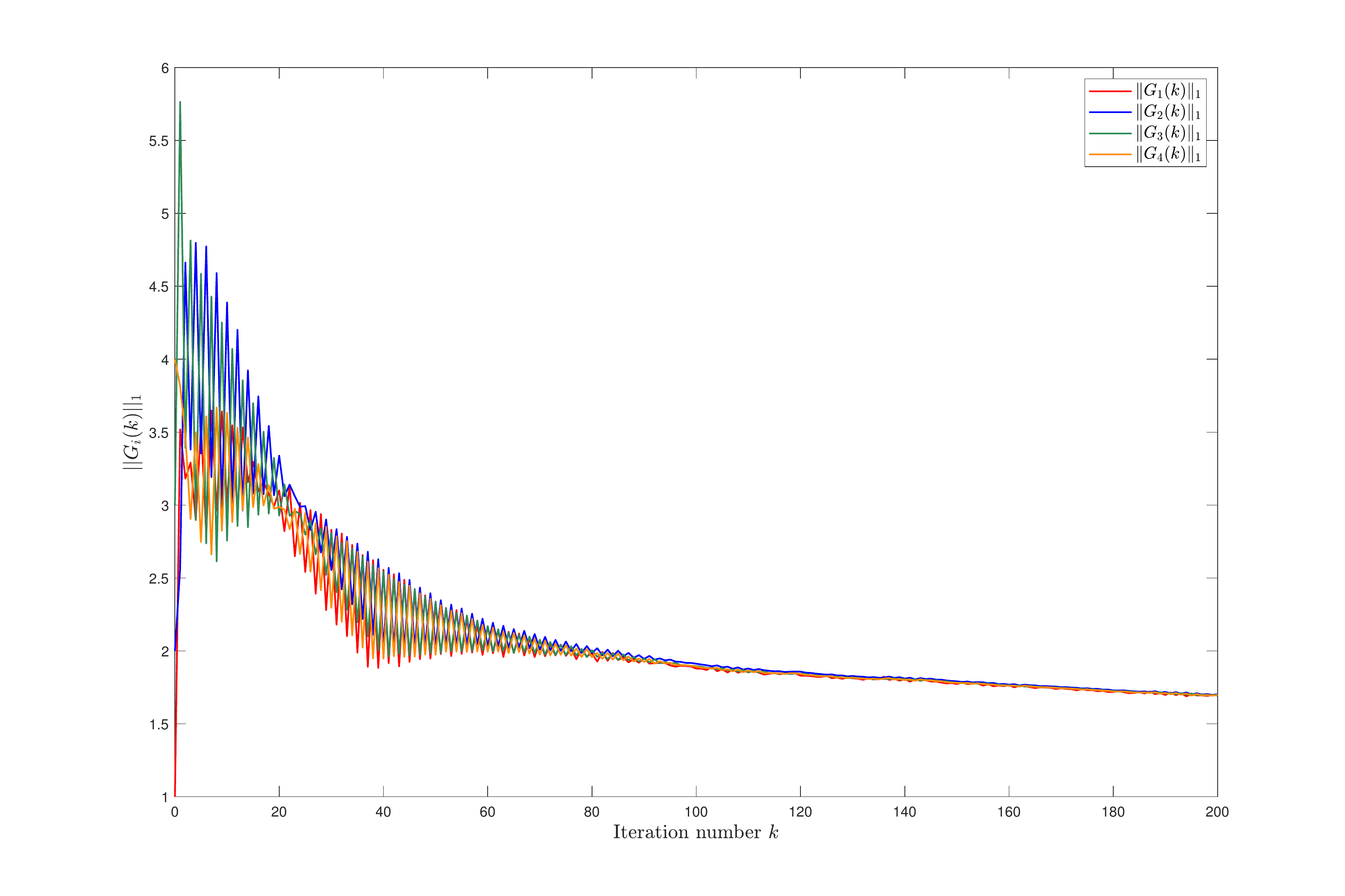}
\end{center}
\caption{Performance of the distributed algorithm}\label{fig3}
\end{figure}

\begin{figure}[H]
\begin{center}
	\includegraphics[width=9cm]{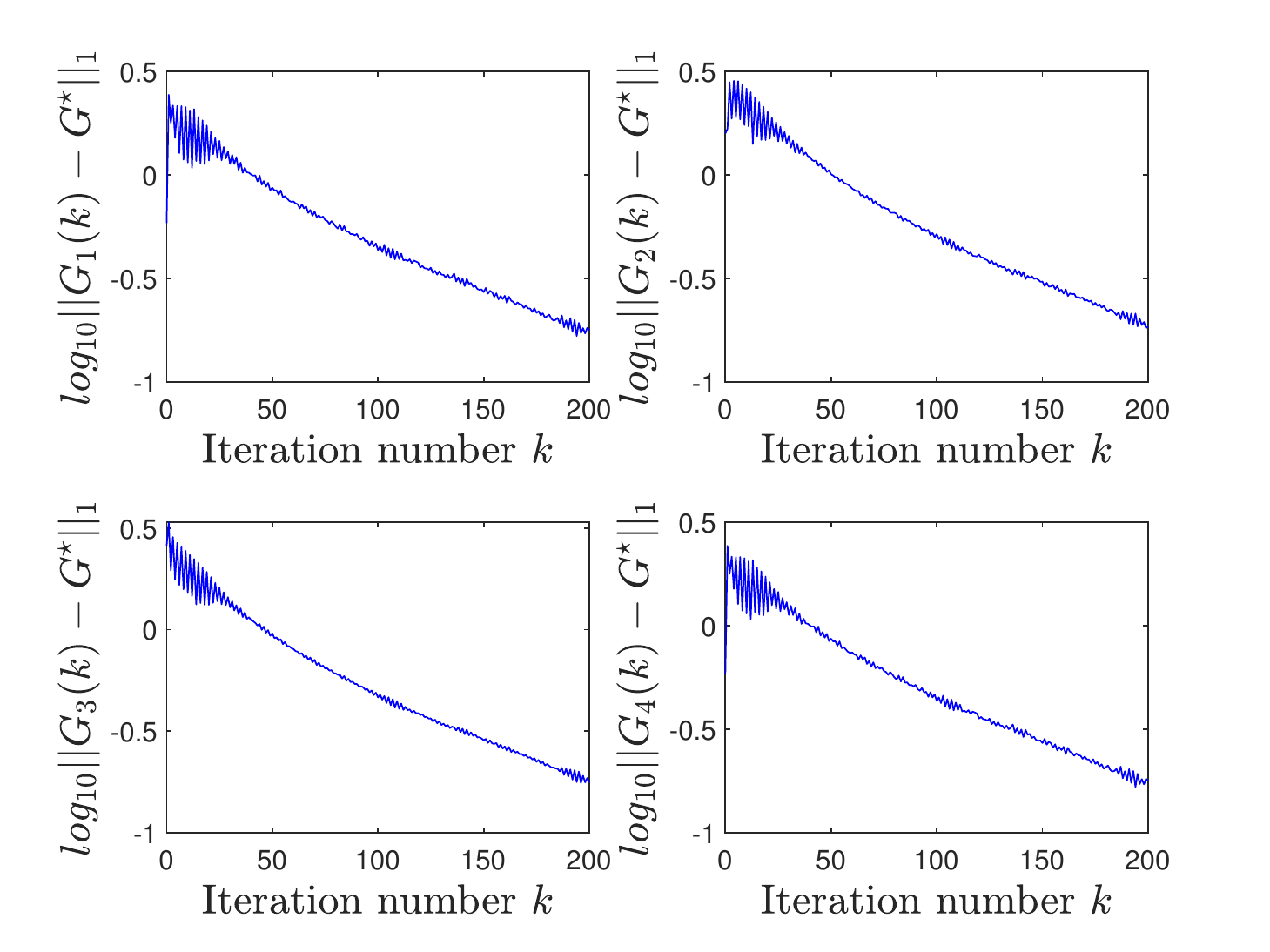}
\end{center}
\caption{Performance of the distributed algorithm}\label{fig2}
\end{figure}

\section{Conclusion}
This paper presents a distributed Q-learning algorithm for stochastic LQ control with random parameters whose statistical properties are unknown. We have proved that the correct solution to the Riccati equation and the optimal controller under the proposed distributed scheme can be derived. Simulation results have verified our analysis. In the future, we are motivated to address stochastic LQ control problems associated with both unknown uncertainties and time delays by using distributed methods.


\begin{thebibliography}{99}
\bibitem{w}W. M. Wonham, ``Optimal stationary control of a linear system with state dependent noise,” SIAM Journal on Control and Optimization, vol. 5, pp. 486–500, 1967.

\bibitem{ei}N. Elia, ``Remote stabilization over fading channels," System and Control Letters, vol.~54, no.~3, pp.~237-249, Mar. 2005. 


\bibitem{MI}M. I. Taksar, A. S. Poznyak and A. Iparraguirre,``Robust output feedback control for linear stochastic systems in continuous time with time-varying parameters," IEEE Transactions on Automatic Control, vol. 43, no. 8, pp. 1133-1136, Aug. 1998. 

\bibitem{fr}J. Willems and G. Blankenship,``Frequency domain stability criteria for stochastic systems," IEEE Transactions on Automatic Control, vol. 16, no. 4, pp. 292-299, Aug. 1971. 

\bibitem{zxl}H. Zhang, L. Li, J. Xu and M. Fu, ``Linear quadratic regulation and stabilization of discrete-time systems with delay and multiplicative noise," IEEE Transactions on Automatic Control, vol. 60, no. 10, pp. 2599-2613, Oct. 2015. 

\bibitem{he}H. Zhang, X. Cui, Y. Luo and H. Jiang, ``Finite-Horizon $H_{\infty }$ tracking control for unknown nonlinear systems with saturating actuators,"  IEEE Transactions on Neural Networks and Learning Systems, vol. 29, no. 4, pp. 1200-1212, Apr. 2018. 

\bibitem{bo}B. Zhao, D. Liu and C. Luo, ``Reinforcement learning-based optimal stabilization for unknown nonlinear systems subject to inputs with uncertain constraints," IEEE Transactions on Neural Networks and Learning Systems, vol. 31, no. 10, pp. 4330-4340, Oct. 2020. 

\bibitem{tao}T. Wang, H. Zhang and Y. Luo,
``Infinite-time stochastic linear quadratic optimal control for unknown discrete-time systems using adaptive dynamic programming approach," Neurocomputing,
vol. 171, iss.~0925-2312, pp.~379-386, Jan. 2016. 

\bibitem{af}J. Li, T. Chai, F. L. Lewis, Z. Ding and Y. Jiang, ``Off-Policy interleaved $Q$-learning: optimal control for affine nonlinear discrete-time systems," IEEE Transactions on Neural Networks and Learning Systems, vol. 30, no. 5, pp. 1308-1320, May 2019. 

\bibitem{hao}H. Xu and S. Jagannathan, ``Stochastic optimal control of unknown linear networked control system using Q-learning methodology," Proceedings of the 2011 American Control Conference, 2011, pp. 2819-2824. 

\bibitem{zh}L. Zhang, E.-K. Boukas, ``Stability and stabilization of Markovian jump linear systems with partly unknown transition probabilities," Automatica, vol. 45, no. 2, pp.~463-468, 2009.

\bibitem{du}K. Du, Q. Meng and F. Zhang, ``A Q-learning algorithm for discrete-time linear-quadratic control with random parameters of unknown distribution: convergence and stabilization," https: //arXiv:2011.04970, Nov. 2020. 




\bibitem{sa}P. Bianchi, G. Fort and W. Hachem, ``Performance of a distributed stochastic approximation Algorithm," IEEE Transactions on Information Theory, vol. 59, no. 11, pp. 7405-7418, Nov. 2013. 
\end{thebibliography}
\end{document}